\def\Aut{\operatorname{Aut}}
\def\SAut{\operatorname{SAut}}
\def\Pic{\operatorname{Pic}}
\def\AffCone{\operatorname{AffCone}}
\def\Ample{\operatorname{Ample}}
\def\Eff{\operatorname{\overline{NE}}}
\def\PP{{\mathbb P}}
\def\O{{\mathcal O}}
\def\QQ{{\mathbb Q}}
\def\NN{{\mathbb N}}
\def\G{{\mathbb G}}
\def\AA{{\mathbb A}}
\def\PP{{\mathbb P}}
\def\V{{\mathbb V}}
\def\K{{\mathbb K}}
\def\U{{\mathcal U}}
\def\V{{\mathcal V}}
\def\0{\circ}
\def\Aut{\operatorname{Aut}}
\def\SAut{\operatorname{SAut}}
\def\Pic{\operatorname{Pic}}
\def\AffCone{\operatorname{AffCone}}
\def\Ample{\operatorname{Ample}}
\def\relint{\operatorname{rel.int.}}
\def\Cone{\operatorname{Cone}}
\def\Eff{\operatorname{\overline{NE}}}
\def\Pol{\operatorname{Pol}}
\def\Forb{\operatorname{Forb}}
\theoremstyle{plain}
\newtheorem{theorem}{Theorem}[section]
\newtheorem{lemma}[theorem]{Lemma}
\newtheorem*{conjecture*}{Conjecture}
\newtheorem{question}[theorem]{Question}
\newtheorem{proposition}[theorem]{Proposition}
\newtheorem{corollary}[theorem]{Corollary}
\theoremstyle{definition}
\newtheorem{definition}[theorem]{Definition}
\newtheorem{example}[theorem]{Example}
\newtheorem{remark}[theorem]{Remark}
\newtheorem{sit}[theorem]{}
\begin{document}

\author{Alexander Perepechko}

\address{
  Kharkevich Institute for Information Transmission Problems\\
  19 Bolshoy Karetny per., 127994 Moscow, Russia
}

\address{
Moscow Institute of Physics and Technology (State University)\\
9 Institutskiy per., Dolgoprudny, Moscow Region, 141701, Russia 
}

\address{
National Research University Higher School of Economics\\
 20 Myasnitskaya ulitsa, Moscow 101000, Russia 
}

\email{a@perep.ru}

\title{Affine cones over cubic surfaces \\are flexible in codimension one}

\thanks{The research was supported by the grant RSF-19-11-00172.}

\begin{abstract}
Let $Y$ be a smooth del Pezzo surface of degree 3 polarized by a very ample divisor that is not proportional to the anticanonical one. Then the affine cone over $Y$ is flexible in codimension one.
Equivalently, such a cone has an open subset with an infinitely transitive action of the special automorphism group on it. 
\end{abstract}

\maketitle

\section{Introduction}
We are working over an algebraically closed field $\K$ of characteristic zero. 
Let $X$ be an affine algebraic variety over $\K$.
A point $p\in X$ is called \emph{flexible} if the tangent space $T_pX$ is spanned by the tangent vectors to the orbits of actions of the additive group of the field $\G_a=\G_a(\K)$ on $X$. 
The variety $X$ is called \emph{flexible} if each smooth point of $X$ is flexible, see \cite{AFKKZ}. 
All the $\G_a$-actions on $X$ generate the \emph{special automorphism group} $\SAut X\subset\Aut X$.

 A group $G$ is said to act on a set $S$ \emph{infinitely transitively} if it acts transitively on the set of ordered $m$-tuples of pairwise distinct points in $S$ for any $m\in\NN$.

 The following theorem explains the significance of the flexibility concept.
\begin{theorem}[{\cite[Theorem 0.1]{AFKKZ}}]\label{th:AFKKZ}
  Let $X$ be an affine algebraic variety of dimension $\ge2$. Then the following conditions are equivalent:
  \vspace{-2pt}
  \begin{enumerate}
    \item The variety $X$ is flexible;
    \item the group $\SAut X$ acts transitively on the smooth locus $X_{\mathrm{reg}}$ of $X$;
    \item the group $\SAut X$ acts infinitely transitively on $X_{\mathrm{reg}}$.
  \end{enumerate}
\end{theorem}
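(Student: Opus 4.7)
The implications $(3) \Rightarrow (2) \Rightarrow (1)$ are essentially formal. $(3) \Rightarrow (2)$ is the case $m = 1$. For $(2) \Rightarrow (1)$, at any smooth point $p$ the tangent space to the $\SAut X$-orbit through $p$ equals the linear span of tangents at $p$ to the orbits of the one-parameter $\G_a$-subgroups passing through $p$; transitivity makes this orbit open in $X_\reg$, so the span must fill $T_p X$ and $p$ is flexible.

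The substance is $(1) \Rightarrow (3)$. My first move is to check that the flexible locus is $\SAut X$-invariant (automorphisms conjugate $\G_a$-actions to $\G_a$-actions) and Zariski open (if locally nilpotent derivations $\delta_1,\dots,\delta_n$ evaluate to a basis of $T_p X$ at $p$, they still span at nearby smooth points by semicontinuity of rank). Assuming $X$ irreducible so that $X_\reg$ is irreducible, flexibility at every smooth point then forces the existence of a single open $\SAut X$-orbit covering all of $X_\reg$: any two such orbits are open in $X_\reg$ and hence intersect. This yields $(2)$.

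To upgrade $(2)$ to $(3)$, I would induct on $m$. The key technical lemma is: given distinct smooth points $p_1,\dots,p_m$ and any tangent vector $v \in T_{p_1}X$, there exists a locally nilpotent derivation $\delta$ on $\O(X)$ with $\delta|_{p_1} = v$ and $\delta$ vanishing at each $p_j$ for $j \ge 2$. The associated $\G_a$-action fixes $p_2,\dots,p_m$ pointwise and translates $p_1$; letting $v$ range over $T_{p_1}X$ we can freely slide $p_1$ in the complement of $\{p_2,\dots,p_m\}$. Combined with the $1$-transitivity from the previous step and a connectedness argument on the configuration space of ordered $m$-tuples of pairwise distinct smooth points, this yields infinite transitivity.

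The heart of the proof, and the main obstacle, is constructing the derivation $\delta$ in the key lemma. My strategy is to start with LNDs $\delta'_1,\dots,\delta'_N$ whose values at $p_1$ span $T_{p_1}X$ (provided by flexibility) and replace each by $f_i \delta'_i$, where $f_i \in \ker \delta'_i$ vanishes at $p_2,\dots,p_m$ but not at $p_1$; such a product is again an LND. The difficulty is producing these kernel elements: flexibility furnishes many LNDs at $p_1$ but offers no a priori control on the relative positions of their kernels with respect to the auxiliary points. The typical remedy is to pre-compose each $\delta'_i$ with a suitable element of $\SAut X$, exploiting the transitivity from the previous step to move $\{p_2,\dots,p_m\}$ into general position relative to the zero loci of available kernel functions. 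Orchestrating these conjugations into a single LND with the prescribed vanishing and non-vanishing behaviour is the core technical content of the theorem.
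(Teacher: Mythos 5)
This theorem is not proved in the paper at all --- it is quoted from \cite[Theorem 0.1]{AFKKZ} --- so there is no in-paper argument to measure you against; what matters is whether your sketch would actually prove the cited result, and it does not. Your handling of $(3)\Rightarrow(2)\Rightarrow(1)$ and of $(1)\Rightarrow(2)$ is essentially right (though even there, the identification of the tangent space of an $\SAut X$-orbit with the span of the $\G_a$-orbit tangents is itself a nontrivial fact, proved in \cite{AFKKZ} via choosing finitely many $\G_a$-subgroups whose composition dominates the orbit, generic smoothness, and equivariance of the distribution). The genuine gap is exactly where you say it is: the key lemma producing a locally nilpotent derivation with prescribed nonzero value at $p_1$ and vanishing at $p_2,\dots,p_m$ is the entire content of the theorem, and your proposal ends by naming it an obstacle rather than proving it. The replica trick $\delta\mapsto f\delta$, $f\in\ker\delta$, genuinely fails in degenerate configurations: if some $p_j$ lies on the same $\delta$-orbit as $p_1$ (or on an orbit not separated from it by kernel invariants --- $\ker\delta$ only separates \emph{general} orbits), then every $f\in\ker\delta$ takes equal values at $p_1$ and $p_j$, so no choice of $f$ can vanish at $p_j$ while being nonzero at $p_1$. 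Saying one should ``pre-compose with suitable elements of $\SAut X$ to move the points into general position'' is not an argument until you specify what general position means for all the $\delta_i'$ simultaneously and prove it can be achieved while keeping track of the tangent directions you need at $p_1$; in \cite{AFKKZ} this is done with the machinery of saturated sets of locally nilpotent derivations and partial quotients, and it occupies the technical core of that paper.

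Your closing step is also too thin: ``a connectedness argument on the configuration space of $m$-tuples'' is not how one gets from $1$-transitivity to $m$-transitivity here, and it is not a proof. The standard route is an induction on $m$ in which one must show that the subgroup of $\SAut X$ fixing $m-1$ given points still acts transitively on the complement of those points in the open orbit --- which is again precisely the unproven key lemma, applied in full strength (one needs not just to fix $p_2,\dots,p_m$ and move $p_1$ infinitesimally, but to generate enough such actions to sweep out an open orbit of the stabilizer). So the proposal correctly reconstructs the architecture of the AFKKZ proof and the standard $f\delta$ device, but the heart of the theorem is left as an acknowledged black box.
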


 The flexibility of affine cones over del Pezzo surfaces of degree $\ge4$ corresponding to any very ample polarization was confirmed in \cite{AKZ}, \cite{PW16}, \cite{P13}. 
 In this paper we study flexibility of affine cones over smooth del Pezzo surfaces of degree $3$. 
 
 A plurianticanonical polarization is a polarization with respect to a divisor proportional to the anticanonical one.
 It is shown in \cite[Corollary~1.8]{CPW13} that the affine cone $X$ over any plurianticanonically polarized del Pezzo surface of degree $3$  admits no $\G_a$-action. 
Nevertheless, $X$ admits a $\G_a$-action for any other very ample polarization, see \cite{CPW15}.
We use constructions and results from \cite{CPW15} in order to show that $\SAut(X)$ acts on $X$ with an open orbit, see Theorem~\ref{th:flex}. 
Then $X$ contains an open subset of flexible points,
that is, $X$ is \emph{flexible in codimension one}, see Corollary~\ref{c:flex}.

In Section 2 we provide a criterion of flexibility in codimension one for affine cones, which generalizes similar flexibility criteria in our previous articles \cite{MPS} and \cite{P13}. 
In Section 3 we recall generalities on cubic surfaces. 
In Section 4 we introduce a useful subdivision of the effective cone. 
In Section 5 we recall two constructions of cylinders. 
In Section 6 for each of the considered smaller cones we introduce a collection of cylinders.
In order to find these collections, we used the computer algebra system SageMath \cite{Sage}, especially the module for rational polyhedral cones created by V.~Braun and A.~Novoseltsev.
In Section 7 we use these collections to prove flexibility in codimension one and provide an example of a flexible cone.

The author is grateful to M.G.~Zaidenberg for everlasting motivation, numerous discussions and remarks, to I.~Cheltsov and J.~Park for useful discussions on the subject, and to I.~Arzhantsev for useful remarks and suggestions.

\section{Flexibility criterion for affine cones}
Here we generalise flexibility criteria for affine cones \cite[Theorem 1.4]{MPS} and \cite[Theorem 5]{P13}.
In this section we let $Y$ denote a normal projective variety, $H$ a very ample divisor on $Y$, and $X=\AffCone_H(Y)$ the affine cone over $Y$ corresponding to $H$. By $\pi\colon X\setminus\{0\}\to Y$ we denote the projectivizing map.

\begin{definition}[{\cite[Definition 3.1.7]{KPZ11}}]
An open subset $U\subset Y$ is called $H$-\emph{polar} if
the complement $Y\setminus U$ is the support of some effective $\QQ$-divisor equivalent to $H$ over $\QQ$.
\end{definition}

\begin{lemma}\label{lm:inv}
Let $U$ be an $H$-polar affine open subset of $Y$ and $S$ be an $\SAut(X)$-orbit on $X$. 
Then $\pi(S)\cap U$ is $\SAut(U)$-invariant.
\end{lemma}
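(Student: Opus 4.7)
The plan is to reduce the claim to a lifting property for additive group actions. Specifically, I aim to show that every $\G_a$-action on $U$ coming from an LND $\delta \in \LND(\O(U))$ is induced through $\pi$ by a $\G_a$-subgroup of $\SAut(X)$. Because $\SAut(U)$ is generated by such one-parameter subgroups and because $\SAut(X)$ preserves $S$ by definition, this lifting immediately forces $\pi(S) \cap U$ to be preserved by every $\exp(t\delta)$ and hence by all of $\SAut(U)$.

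The key steps are as follows. First, using that $U$ is $H$-polar, I would choose a positive integer $m$ and a homogeneous element $s \in R_m$, where $R = \O(X)$ is the graded coordinate ring, so that the zero locus of $s$ on $Y$ is exactly $Y \setminus U$; then $\pi^{-1}(U) = X_s$ with coordinate ring $R_s$, and $\O(U) = R_{(s)}$ is the degree-zero subring. Second, I would extend $\delta$ to a unique weight-zero $\G_m$-homogeneous derivation $\bar\delta$ on $R_s$ subject to $\bar\delta(s)=0$, using homogeneity together with the Leibniz rule; the extension is locally nilpotent because $\delta$ is. Third, I would clear denominators: for $N$ large enough the derivation $\tilde\delta \coloneq s^N \bar\delta$ maps $R$ into itself, is $\G_m$-homogeneous of weight $Nm$, and remains locally nilpotent since $s \in \ker \bar\delta$. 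Hence $\tilde\delta \in \LND(R)$ and $\exp(t\tilde\delta) \in \SAut(X)$ preserves $S$. Finally, since $\tilde\delta(s) = 0$, the flow $\exp(t\tilde\delta)$ preserves $\pi^{-1}(U)$, and on $\pi^{-1}(U)$ its orbits coincide with those of $\bar\delta$ (the factor $s^N$ is $\bar\delta$-constant and only reparametrizes the flow); these project under $\pi$ onto the $\exp(t\delta)$-orbits on $U$. So for $p \in \pi(S) \cap U$ with any lift $\tilde p \in S \cap \pi^{-1}(U)$, the entire orbit $\exp(t\delta)(p)$ is the projection of $\exp(t\tilde\delta)(\tilde p) \subset S$, hence lies in $\pi(S) \cap U$.

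The main obstacle I expect is the denominator-clearing step: producing a single integer $N$ such that $s^N \bar\delta$ sends every element of $R$ into $R$. This follows from finite generation of $R$ together with the explicit homogeneous formula for $\bar\delta$ on generators, but it is the heart of the generalization over \cite{MPS} and \cite{P13}, which treat more restricted choices of $U$. Once this is established, the remaining assertions are formal consequences of the local nilpotence of $\bar\delta$ and its compatibility with the grading.
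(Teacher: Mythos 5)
Your strategy coincides with the paper's: the paper proves the lemma by lifting a $\G_a$-action on $U$ to a $\G_a$-action on the cone whose orbits project to the given ones, citing \cite[Lemma 1.3]{MPS} for that lift, and then using that $\SAut(X)$ preserves the orbit $S$ (the paper phrases this contrapositively, you argue directly, which is immaterial). So you are not taking a different route; you are inlining a proof of the cited lemma, and your concluding steps (the replica $s^N\bar\delta$ has the same orbits as $\bar\delta$ on $\{s\neq 0\}$ because $s\in\ker\bar\delta$; $\pi$ is equivariant; $S$ is $\SAut(X)$-stable; every element of $\SAut(U)$ is a composition of maps $\exp(t\delta)$) are correct.

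The soft spot is the step you dismiss, and conversely. The extension of $\delta\in\LND(\O(U))$ to a weight-zero homogeneous derivation $\bar\delta$ of $R_s$ with $\bar\delta(s)=0$ does not follow from ``homogeneity together with the Leibniz rule'' alone: these determine $\bar\delta$ only on the subalgebra generated by $(R_s)_0=\O(U)$ and $s^{\pm1}$, while $R_s$ has homogeneous components in degrees not divisible by $m=\deg s$. For homogeneous $a$ of degree $d$ one has $a^m=s^d u$ with $u\in\O(U)$, which forces $\bar\delta(a)=a\,\delta(u)/(mu)$ in the fraction field, but the regularity of this element (i.e.\ the very existence of the extension), and the local nilpotency of $\bar\delta$ on all of $R_s$ rather than just on $\O(U)[s,s^{-1}]$, require a genuine argument --- e.g.\ existence and uniqueness of $\G_a$-linearizations (no nontrivial characters of $\G_a$) so that the action on $U$ lifts to the $\G_m$-torsor $\pi^{-1}(U)\to U$ fixing $s$, together with an integrality argument for local nilpotency, $R_s$ being integral over $\O(U)[s,s^{-1}]$. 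This is exactly the content of \cite[Lemma 1.3]{MPS} (cf.\ \cite{KPZ11}), which the paper cites instead of reproving. By contrast, the denominator-clearing you single out as the heart of the matter is routine: apply $s^N$ to the images of finitely many algebra generators of $R$ and use $s\in\ker\bar\delta$. So the proposal is the paper's argument with the cited lemma unpacked, and it becomes complete once the extension step is justified as above.
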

\begin{proof}
Assume the contrary. 
Then there exists a $\G_a$-action $G$ on $U$ that does not preserve $\pi(S)\cap U$. 
By \cite[Lemma 1.3]{MPS}, there exists a corresponding $\G_a$-action $\hat G$ on $X$ trivial on $X\setminus \pi^{-1}(U)$ such that $\pi$ sends $\hat G$-orbits to $G$-orbits. 
Thus, $\hat G$ does not preserve $S\cap\pi^{-1}(U)$. 
So, $\hat G$ does not preserve $S$, a contradiction.
\end{proof}

\begin{definition}
Let $\mathcal{U}=\{U\}$ be a collection of open affine subsets of $Y$.
\begin{enumerate}
    \item We say that a subset $Z$ of $Y$ is $\mathcal{U}$-\emph{invariant} if for any $U\in\mathcal{U}$ the intersection $U\cap Z$ is $\SAut(U)$-invariant. 
    \item We say that $\mathcal{U}$ is \emph{transversal} if $\bigcup_{U\in \mathcal{U}} U$ does not admit nontrivial $\mathcal{U}$-invariant subsets.
    \item We say that $\mathcal{U}$ is $H$-\emph{complete} if for any (not necessarily effective) $\QQ$-divisor equivalent to $H$ its support is not contained in the complement $Y\setminus\bigcup_{U\in\mathcal{U}} U$.
\end{enumerate}
\end{definition}

\begin{theorem}\label{th:flex}
 Let $\mathcal{U}$ be a transversal collection of $H$-polar open affine subsets of a normal projective variety $Y$ for some very ample divisor $H$. 
 Then there exists an $\SAut(X)$-orbit $S$ on the corresponding affine cone $X$ whose image contains $\bigcup_{U\in\mathcal{U}}U$.
  If, moreover, $\mathcal{U}$ is $H$-complete, then $S$ is open in $X$ and contains the open subset $\pi^{-1}(\bigcup_{U\in\mathcal{U}}U)\subset X$.
\end{theorem}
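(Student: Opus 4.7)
The plan is to prove both assertions by combining Lemma~\ref{lm:inv} with transversality for the first, and then leveraging $H$-completeness via the graded algebra of $\SAut(X)$-invariants for the second. Write $W = \bigcup_{U\in\mathcal{U}} U$.

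For the first assertion, I would choose any point $\tilde p_0 \in \pi^{-1}(W)$ and set $S = \SAut(X)\cdot\tilde p_0$. For every $U \in \mathcal{U}$, Lemma~\ref{lm:inv} gives that $\pi(S)\cap U$ is $\SAut(U)$-invariant, and hence so is its complement $U\setminus\pi(S)$. Consequently both $W\cap\pi(S)$ and $W\setminus\pi(S)$ are $\mathcal{U}$-invariant subsets of $W$, and by transversality each one is either empty or all of $W$. Since $\pi(\tilde p_0)\in W\cap\pi(S)$ the former set is nonempty, forcing $W\setminus\pi(S) = \emptyset$, i.e.\ $\pi(S)\supseteq W$.

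For the second assertion, assume in addition that $\mathcal{U}$ is $H$-complete. Since $\pi^{-1}(W)$ is open in $X$, it suffices to prove $\pi^{-1}(W)\subseteq S$. As a first step I would show that the graded algebra $\O(X)^{\SAut(X)}$ (graded because $\G_m$ normalizes $\SAut(X)$ through conjugation) contains no non-constant homogeneous element. Indeed, a homogeneous invariant $f$ of positive degree $n$ is a section of $\O(nH)$, so $\tfrac{1}{n}\operatorname{div}(f)\sim_\QQ H$; $H$-completeness forces $\operatorname{Supp}(\operatorname{div}(f))\cap W \neq \emptyset$, giving some $\tilde q\in\pi^{-1}(W)$ with $f(\tilde q)=0$. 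Applying the first assertion to the orbit through $\tilde q$, that orbit also covers $W$ via $\pi$, so it hits points of $\pi^{-1}(W)$ lying over the open set where $f|_Y$ is non-vanishing---contradicting the invariance of $f$.

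With the invariants trivial, I would conclude $\pi^{-1}(W)\subseteq S$ by excluding any decomposition of $\pi^{-1}(W)$ into more than one $\SAut(X)$-orbit. The main obstacle is precisely this last step: translating $\O(X)^{\SAut(X)}=\K$ into the set-theoretic inclusion $\pi^{-1}(W)\subseteq S$. Since $\SAut(X)$ is an ind-group rather than an algebraic group, Rosenlicht-type arguments do not apply verbatim; I would instead use the explicit construction of $\hat G$ from \cite[Lemma~1.3]{MPS} together with the $H$-polar data to produce $\G_a$-actions that move points within fibers of $\pi$. Concretely, for $\tilde p,\tilde q$ in a common fiber one composes a horizontal lift that shifts the base point with another lift that returns it, whose net effect is a non-trivial motion along the fiber; $H$-completeness is what guarantees the availability of enough such lifts at each point of $\pi^{-1}(W)$ to realize arbitrary fiberwise displacement, thus placing every point of $\pi^{-1}(W)$ into $S$.
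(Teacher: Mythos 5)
Your first assertion is proved exactly as in the paper (Lemma~\ref{lm:inv} plus transversality applied to $\pi(S)\cap W$ and its complement), and your observation that there are no nonconstant homogeneous $\SAut(X)$-invariants is correct as far as it goes. But the step you yourself flag as the ``main obstacle'' is not a side issue to be patched by composing lifted $\G_a$-actions: it is the entire content of the second assertion, and the sketch you give does not close it. Triviality of the homogeneous invariants does not imply that $\pi^{-1}(W)$ is a single orbit. The scenario that must be excluded is an orbit $S$ of codimension one whose $\G_m$-translates $tS$ sweep out $\pi^{-1}(\pi(S))$; in that situation there need be no nonconstant homogeneous invariant at all, because the function ``separating'' the translates exists only as a rational section with poles, not as a regular function on $X$. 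Your use of $H$-completeness enters only through divisors of regular homogeneous invariants, i.e.\ through \emph{effective} divisors equivalent to multiples of $H$, which is strictly weaker than the definition --- and the ``not necessarily effective'' clause in the definition of $H$-complete is there precisely for this case. Likewise, the claim that composing a lift of a $\G_a$-action on some $U$ with another lift produces a nontrivial motion along the fiber is exactly what fails when $S$ has codimension one: all such lifts preserve $S$, so no amount of composing them can move a point off $S$; whether their net fiberwise effect is nontrivial is the statement to be proved, and no mechanism relating it to $H$-completeness is given.

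The paper's proof supplies that mechanism differently. Since homotheties normalize $\SAut(X)$, the subgroup $\Gamma\subset\G_m$ preserving $S$ is either all of $\G_m$ --- in which case $S=\pi^{-1}(\pi(S))\supset\pi^{-1}(W)$ and $S$ is open --- or finite of order $\gamma$. In the latter case one passes to $X^\times$, the blowup at the vertex (the total space of $\O_Y(-H)$), and to the quotient $X'=X^\times/\Gamma$, the total space of $\O_Y(-\gamma H)$; because two points of $S$ in one fiber differ by an element of $\Gamma$, the closure of the image of $S$ in $X'$ is a \emph{rational} section, whose divisor $D\equiv\gamma H$ (in general non-effective) has support disjoint from $\pi(S)\supset W$. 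Then $\tfrac1\gamma D$ is a $\QQ$-divisor equivalent to $H$ supported in $Y\setminus W$, contradicting $H$-completeness. To repair your write-up you would essentially have to reproduce this rational-section argument; the invariant-theoretic reduction you propose cannot reach it.
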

 \begin{proof}
Let $S\subset X$ be an $\SAut(X)$-orbit of some point $x\in\pi^{-1}(\bigcup_{U\in\mathcal{U}}U)$. By \cite[Proposition 1.3]{AFKKZ}, $S$ is locally closed. 
By Lemma~\ref{lm:inv}, the image $\pi(S)$ is $\mathcal{U}$-invariant. 
Since $\mathcal{U}$ is transversal and $\pi(S)\cap \bigcup_{U\in\mathcal{U}}U\ni \pi(x)$ is nonempty,
one has $\pi(S)\supset\bigcup_{U\in\mathcal{U}}U$.

Consider the natural $\G_m$-action on $X$ by homotheties. 
It sends $\SAut(X)$-orbits to $\SAut(X)$-orbits. 
The subgroup $\Gamma\subset\G_m$ that sends $S$ to itself either is finite, or equals $\G_m$. 
In the latter case $S=\pi^{-1}\circ\pi(S)\supset\pi^{-1}(\bigcup_{U\in\mathcal{U}}U)\subset X.$
Assume the former case, then $S$ is not open in $X$. 

Denote by $X^\times$ the blowup of $X$ at the origin. 
Then $X^\times$ is the total space of the line bundle $\O_Y(-H)$ over $Y$. Denote $\gamma=|\Gamma|$. 
The quotient $X'=X^\times/\Gamma$ is the total space of $\O_Y(-\gamma H)$ given by the map $t\mapsto t^{\gamma}$ in a local coordinate $t$ on each trivializing chart. 

Thus, we have the sequence of maps
\[X\dashrightarrow X^\times\to X'\to Y.\]
Let $S'$ be the Zariski closure of the image of $S$ in $X'$. Since $\pi(S)$ is open in $Y$, $S'$ intersects a generic fiber of $X'\to Y$ in a single point. Then $S'$ is a rational section of the line bundle $\O_Y(-\gamma H)$. The corresponding Cartier divisor $D$ is equivalent to $\gamma H$. 

Therefore, $\frac{1}{\gamma}D$ is a $\QQ$-divisor equivalent to $H$ such that its support is disjoint with $\pi(S)$.
So, $\mathcal{U}$ is not $H$-complete. 
This argument is essentially a generalisation of \cite[Lemma 1.2]{MPS}.
\end{proof}

\section{Cubic surfaces}

In the rest of this paper we denote by $Y$
a del Pezzo surface of degree 3. It is obtained by blowing up the projective plane $\PP^2$ at
six points $p_1,\ldots,p_6$ that do not belong to the same conic and no three of which lie on the same line, see \cite[Theorem IV.2.5]{Manin}. 

We denote by
\begin{itemize}
    \item  $\sigma$ the contraction $Y\to\PP^2$;
    \item  $E_1,\ldots,E_6$ the (-1)-curves contracted into $p_1,\ldots,p_6$ respectively;
    \item  $L_{i,j}$ a line passing through $p_i, p_j$ for given $i,j\in\{1,\ldots,6\}$;
    \item  $Q_i$ a conic passing through all the blown up points except $p_i$ for a given $i\in\{1,\ldots,6\}$;
    \item  $L$ the class of the proper transform of a general line on $\PP^2$;
    \item  $-K$ the anticanonical divisor class on $Y$;
    \item  $\Eff(Y)$ the Mori cone of numerically effective divisors; it is generated by all the (-1)-curves in $Y$;
    \item  $\Ample(Y)$ the ample cone; it contains all the ample divisors in its interior and is dual to $\Eff(Y)$.
\end{itemize}
We also denote the (-1)-curves in $Y$, which are proper transforms of $L_{i,j}$ and $Q_i$, by the same letters.
Given a $\QQ$-divisor $D$, we denote its divisor class by the same letter.
Note that 
\begin{itemize}
    \item $\Ample(Y)\subset\Eff(Y)$,
    \item $-K \equiv 3L-\sum_{i=1}^6E_i$,
    \item $L_{i,j}\equiv L-E_i-E_j$,
    \item $Q_j\equiv 2L-\sum_{i=1}^6E_i+E_j\equiv -K-L+E_j.$
\end{itemize}

\section{Cones}
The 27 lines on a cubic surface were described by Cayley and Salmon in 1849, and later by Schl\"afli and Steiner in 1854, e.g. see \cite{Dolg27}.
The incidence graph of the (-1)-curves in $Y$ is the complement of the Schl\"afli graph, which is known to be 4-transitive, e.g. see  \cite{Cam}. That is, any isomorphism between two induced subgraphs on 4 vertices can be extended to the automorphism of the whole graph.
Thus,

\begin{remark}\label{r:4-hom}
Any ordered 4-tuple of disjoint (-1)-curves on $Y$ can be extended to an ordered 6-tuple of disjoint (-1)-curves, in the same manner as $(E_1,E_2,E_3,E_4)$ can be extended to $(E_1,\ldots,E_6)$. The latter fails for 5-tuples: the tuple $(E_1,\ldots,E_4,L_{5,6})$ is maximal. 
\end{remark}

\begin{definition}
We denote by $\relint(A)$ the relative interior of a rational polyhedral cone $A$.
\end{definition}

Given divisor classes $D_1,\ldots,D_n\in\Pic_\QQ(Y)$, we denote the cone generated by them by \[\Cone(D_1,\ldots,D_n)=\sum_{i=1}^n\QQ_{\ge0}D_i.\]

\begin{proposition}\label{pr:cones}
Let $H$ be a $\QQ$-divisor class in the interior of the Mori cone of $Y$. Then, up to an ordered choice of six disjoint (-1)-curves $E_1,\ldots,E_6$, it belongs to the relative interior of one of the following cones:
\begin{itemize}
    \item $B_6=\Cone(-K, E_1,E_2,E_3,E_4,E_5,E_6)$;
    \item $B_5=\Cone(-K, E_1,E_2,E_3,E_4,E_5)$;
    \item $B_5^\prime=\Cone(-K, E_1,E_2,E_3,E_4,L-E_5-E_6)$;
    \item $B_4=\Cone(-K, E_1,E_2,E_3,E_4)$;
    \item $B_3=\Cone(-K, E_1,E_2,E_3)$;
    \item $B_2=\Cone(-K, E_1,E_2)$;
    \item $B_1=\Cone(-K, E_1)$;
    \item $B_0=\Cone(-K)$;
    \item $C_5=\Cone(-K, L-E_6, E_1,E_2,E_3,E_4,E_5)$;
    \item $C_5^\prime=\Cone(-K, L-E_6, E_1,E_2,E_3,E_4,L-E_5-E_6)$;
    \item $C_4=\Cone(-K, L-E_6, E_1,E_2,E_3,E_4)$;
    \item $C_3=\Cone(-K, L-E_6, E_1,E_2,E_3)$;
    \item $C_2=\Cone(-K, L-E_6, E_1,E_2)$;
    \item $C_1=\Cone(-K, L-E_6, E_1)$;
    \item $C_0=\Cone(-K, L-E_6)$.
\end{itemize}
\end{proposition}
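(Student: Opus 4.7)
The plan is a polyhedral decomposition of $\Eff(Y)$ via case analysis combined with the Weyl group action. The group $W(E_6)$ acts on $\Pic(Y)_\QQ$, fixes $-K$, preserves the intersection form and the effective cone, and acts (simply) transitively on ordered 6-tuples of mutually disjoint $(-1)$-curves: each such tuple determines a blowdown $Y \to \PP^2$, and $|W(E_6)| = 51840$ matches the count of such ordered tuples (existence of the extension follows from Remark \ref{r:4-hom}). Thus, for any $H \in \relint(\Eff(Y))$, it suffices to exhibit a single 6-tuple relative to which $H$ lies in the relative interior of one of the 15 listed cones.

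Fix a 6-tuple and expand $H = a_0(-K) + \sum_{i=1}^6 a_i E_i$; by a Weyl permutation of the $E_i$, arrange $a_1 \ge \cdots \ge a_6$. If $a_6 \ge 0$, all coefficients are nonnegative and $H \in \relint(B_k)$, where $k$ is determined by the vanishing pattern $a_k > 0 = a_{k+1} = \cdots = a_6$. If $a_6 < 0$, use the identity
\[
E_6 = \tfrac12\bigl[(-K) + E_1 + \cdots + E_5 - 3(L - E_6)\bigr]
\]
to rewrite $H$ over the spanning set $\{-K, L - E_6, E_1, \ldots, E_5\}$, producing a positive coefficient $-\tfrac{3 a_6}{2}$ on the nef pencil class $L - E_6$. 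When the remaining coefficients $a_0 + a_6/2$ and $a_i + a_6/2$ ($i \le 5$) are nonnegative, $H$ lies in $\relint(C_k)$ for the appropriate $k$. When $a_5 < -a_6/2$, an elementary Weyl swap $E_5 \leftrightarrow L_{5,6} = L - E_5 - E_6$ gives a nonnegative expansion over $\{-K, L - E_6, E_1, \ldots, E_4, L_{5,6}\}$, placing $H$ in $\relint(C_5')$ (or, with further vanishing, in $\relint(B_5')$); analogous adjustments take care of the remaining smaller cones.

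The principal obstacle is verifying completeness: that the union of all Weyl-translates of the 15 listed cones covers $\relint(\Eff(Y))$ entirely, with the subcase dichotomies above being exhaustive. This reduces to a finite polyhedral check, which I would perform computationally in SageMath (as the authors do), by enumerating the Weyl-translated cones and confirming their union agrees with $\Eff(Y)$. The subtle structural point is the role of the nef class $L - E_6$ in the $C$-cones: not being a $(-1)$-curve, its inclusion is precisely what enables reaching the parts of $\Eff(Y)$ inaccessible to nonnegative combinations involving only $-K$ and six disjoint $(-1)$-curves.
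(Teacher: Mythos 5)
Your strategy (reduce by the $W(E_6)$-action, then do a coordinate case analysis in the basis $\{-K,E_1,\dots,E_6\}$) is not the paper's, and as written it has a genuine gap exactly at the heart of the statement. The whole content of the proposition is the covering claim that every class in the interior of $\Eff(Y)$ lands in one of the fifteen relative interiors for \emph{some} sixer, and you explicitly defer this to an unperformed SageMath enumeration. The paper does not need such a computation: it takes the stellar subdivision of $\Eff(Y)$ along the anticanonical ray, so the cone containing $H$ in its relative interior is spanned by $-K$ and the Fujita face of $H$, and then invokes the classification of these faces from \cite{CPW15} (type $B(r)$: $r\le 6$ disjoint $(-1)$-curves; type $C(5)$: five disjoint pairs of meeting $(-1)$-curves), normalizes via Remark~\ref{r:4-hom}, and performs one further stellar subdivision along $L-E_6$ inside type $C(5)$. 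Without either that structural input or the actual finite check, the key step is simply missing.

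Moreover, the case analysis you do sketch is not exhaustive, so the deferred check would in fact reveal holes. Take $H=\epsilon(-K)+L_{4,5}+L_{4,6}+L_{5,6}$ with small $\epsilon>0$: it is ample plus effective, hence big, i.e.\ interior to the Mori cone, and its coordinates are $a_0=1+\epsilon$, $a_1=a_2=a_3=1$, $a_4=a_5=a_6=-1$ (already sorted). After your substitution for $E_6$ the coefficients of $E_4$ and $E_5$ are both $-3/2$, and after the swap $E_5\leftrightarrow L_{5,6}$ the coefficient of $E_4$ is still $-3/2<0$; one checks directly that $H$ lies in none of the listed cones for this sixer, in any ordering. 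To capture it one must pass to a genuinely different sixer, e.g.\ $(L_{4,5},L_{4,6},L_{5,6},E_1,E_2,E_3)$, where $H\in\relint(B_3)$ --- a Cremona-type Weyl element, not a permutation, and not covered by your ``analogous adjustments''. Relatedly, your ``elementary Weyl swap'' $E_5\leftrightarrow L_{5,6}$ fixing $E_1,\dots,E_4$ does not exist in $W(E_6)$: since $(E_1,\dots,E_4,L_{5,6})$ is a maximal tuple (Remark~\ref{r:4-hom}), no isometry of $\Pic(Y)$ preserving $(-1)$-classes can send the sixer $(E_1,\dots,E_6)$ to one containing it; as a mere change of spanning set the substitution is legitimate, but the terminological point reflects the real issue, namely that permutations plus this one substitution cannot reach all of $\relint(\Eff(Y))$, and the cases where they fail are precisely the ones your argument leaves unaddressed.
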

\begin{proof}
Consider a stellar subdivision of the Mori cone $\Eff(Y)$ along the anticanonical ray spanned by $-K$. 
Each cone of the subdivision fan is either a face of $\Eff(Y)$ or is generated by a face of $\Eff(Y)$ and the anticanonical ray. The former and the latter ones are in one-to-one correspondence: the corresponding cones coincide under taking quotient of the ambient space by $\langle K\rangle$.

Thus, a cone $C$ that contains $H$ in its relative interior is generated by the corresponding face $F$ of $\Eff(Y)$ and the anticanonical ray.
Following \cite[Section 2.1]{CPW15}, $F$ is the Fujita face of $H$ and is generated either by $r$ disjoint (-1)-curves (type $B(r)$ in terms of \cite{CPW15}), $r\le6$, or by 5 disjoint pairs of intersecting (-1)-curves (type $C(5)$ in terms of \cite{CPW15}).

Assume that $F$ is of type $B(r)$ and generated by disjoint (-1)-curves $D_1,\ldots,D_r$. 
By Remark~\ref{r:4-hom}, we may assume that $D_i=E_i$ for all $i\le\min(4,r)$. If $r\le4$, then the cone $C$ equals $B_r$. 

For $r>4$, there are only three  (-1)-curves disjoint with $E_1,\ldots,E_4$, namely, $E_5,E_6,L_{5,6}$. 
Since one may swap $E_5$ and $E_6$ by a choice of order, $C$ equals $B_5$ or $B_5^\prime$ for $r=5$. 
Finally, $C$ equals $B_6$ for $r=6$, because $L_{5,6}$ intersects both $E_5$ and $E_6$.

Assume now that $F$ is of type $C(5)$ and is generated by 5 disjoint pairs of intersecting (-1)-curves $D_i$ and $D_i^\prime$, $i=1,\ldots,5$. 
That is, $D_i\cdot D_j=D_i^\prime\cdot  D_j^\prime=D_i\cdot D_j^\prime=0$ for $i\neq j$ and $D_i\cdot D_i^\prime=1$. 

Let 
\[H\equiv k(-K)+\sum_{i=1}^5(d_iD_i+d_i^\prime D_i^\prime),\]
where $k,d_1,d_1^\prime,\ldots,d_5,d_5^\prime\in\QQ_{>0}$.
Without loss of generality, $d_i-d_i^\prime$ is positive for $i\le r$ and zero for $i>r$, where $r\in\{0,1,\ldots,5\}$. 
 
By Remark~\ref{r:4-hom}, we may assume that $D_i=E_i$ for $i\le4$. 
The only (-1)-curves disjoint with $E_1,\ldots,E_4$ are $E_5,E_6,L_{5,6}$.
Thus, the (unordered) pair $\{D_5, D_5^\prime\}$ coincides either with $\{E_5,L_{5,6}\}$ or with $\{E_6,L_{5,6}\}$. Reordering $E_5,E_6$ if necessary, we may assume the former. We infer that $D_i^\prime=L_{5,6}$ for $i\le4$, because it is a unique (-1)-curve that meets $E_i$ and is disjoint with each $E_j$, where $j\neq i, j\le5$. In particular, $D_i+D_i^\prime\equiv L-E_6$ for all $i=1,\ldots,5.$ Thus,
\[H\equiv k(-K)+\left(\sum_{i=1}^5 d_i^\prime\right)(L-E_6)+ \sum_{i=1}^r(d_i-d_i^\prime)D_i,\]
where all coefficients are positive, cf. \cite[(2.1.4)]{CPW15}.
In the case $r\le 4$, $H\in\relint(C_r)$. For $r=5$, either $D_5=E_5$ and $H\in\relint(C_r)$, or $D_5=L_{5,6}$ and $H\in\relint(C_r^\prime)$.
In fact, we have a stellar subdivision of the cone $C$ along $L-E_6$.
 \end{proof}

\section{Cylinders}

\begin{definition}[{\cite[Definition 3.1.5]{KPZ11}}]
An open subset $U$ of a normal projective variety is called a \emph{cylinder} if
$U\cong \AA^1\times Z$ for some smooth variety $Z$ with $\Pic(Z)=0$. 
By \emph{fibers} of $U$ we mean $\AA^1$-fibers of the projection to $Z$. 
There is a natural $\G_a$-action on $U$, whose orbits are fibers of $U$.
\end{definition}

We introduce below two known constructions of cylinders on a cubic surface $Y$.

\begin{sit}
Let $i$ be equal to either 1 or 6.
Consider two tangent lines $T_i,T_i^\prime$ from $p_i$ to $Q_i$ on $\PP^2$. 
The pencil generated by $2T_i$ and $Q_i$ consists of conics tangent to $T_i$ at $p=T_i\cap Q_i$. 
Its only base point is $p$. 

Thus, the complement 
\[U_i=Y\setminus\sigma^{-1}(Q_i\cup T_i)\cong\AA^1\times\AA^1_*\] 
is a cylinder. Similarly, we introduce another cylinder 
\[U_i^\prime=Y\setminus\sigma^{-1}(Q_i\cup T_i^\prime).\]
Note that the complement to $U_i\cup U_i^\prime$ in $Y$ consists of (-1)-curves $E_1,\ldots,E_6,Q_i$.
\end{sit}

\begin{sit}\label{sit:P1-cyl}
Consider the following cylinder as in \cite[Example 4.1.6]{CPW15} and \cite[Lemma 4.2.3, Case 2]{CPW15}. 
Let $i,j\in\{1,\ldots,6\}$, $i\neq j$. 
Let $\sigma'\colon Y\to\PP^1\times\PP^1$ be the morphism contracting curves $E_k$, $k\in\{1,\ldots,6\}\setminus\{i,j\},$ and $L_{i,j}$ into points $p_1,p_2,p_3,p_4,p_{i,j}$ respectively. 
Choose coordinates on $\PP^1\times\PP^1$ so that images of $E_i$ and $E_j$ are curves of bidegrees $(1,0)$ and $(0,1)$ respectively.

There exists an irreducible curve $Q$ of bidegree $(2,1)$ that passes through $p_1,p_2,p_3,p_4$, $p_{i,j}$. 
One may check that $Q$ is the image of $Q_i$. 
Indeed, the proper transforms in $Y$ of general curves of bidegrees $(1,0)$ and $(0,1)$ are equivalent to $E_i+L_{i,j}\equiv L-E_j$ and $E_j+L_{i,j}\equiv L-E_i$ respectively. 
Thus, the proper transform $\tilde Q$ of $Q$ in $Y$ satisfies
\begin{align*}
 \tilde Q\cdot E_k&=1, \qquad k\neq i,j,   \\
 \tilde Q\cdot (L-E_j)&=1,\\
 \tilde Q\cdot(L-E_i)&=2,\\
 \tilde Q\cdot \tilde Q = Q\cdot Q-5&=-1.
\end{align*}
 There is a unique (-1)-curve on $Y$ satisfying these relations, namely, $Q_i$.

There are two curves $T,T'$ of bidegree $(0,1)$ tangent to $Q$.
Let $q=T\cap Q$ and let $R$ be a curve of bidegree $(1,0)$ passing through $q$. 
Consider a pencil generated by $Q$ and by $2R+T$. 
Its general fiber is an irreducible rational curve of bidegree $(2,1)$ that has contact with $Q$ at $q$ of order 2, so $q$ is the only base point of this pencil.
Then we have a cylinder $\PP^1\times\PP^1\setminus(T\cup Q\cup R).$ We denote its isomorphic preimage in $Y$ by $V_{i,j}$. 

The same construction for $T'$ yields another cylinder $V_{i,j}^\prime\subset Y$.
Note that the complement to $V_{i,j}\cup V_{i,j}^\prime$ in $Y$ consists of contracted (-1)-curves $E_k$, $k\in\{1,\ldots,6\}\setminus\{i,j\},$ $L_{i,j}$, the curve $Q_i$, and the preimages of points $T'\cap R,\,T\cap R'$, where $R'$ is the curve of bidegree $(1,0)$ passing through $T'\cap Q$.
\end{sit}

\section{Collections}
\begin{definition}
Given an open subset $U\subset Y$, which complement consists of components $D_1,\ldots,D_n$, we define the \emph{polarity cone}
$\Pol(U)=\Cone(D_1,\ldots,D_n)$. Then $U$ is $H$-polar for any divisor $H$ from $\relint(\Pol(U))$.

Given a collection $\U$ of open subsets of $Y$, we define the \emph{polarity cone} via \[\Pol(\U)=\bigcap_{U\in\U}\Pol(U).\] Let $D_1,\ldots,D_n$ be the irreducible curves in $Y\setminus \bigcup_{U\in\U}U$. We define the \emph{forbidden cone} via 
\[\Forb(\U)=\Cone(D_1,\ldots,D_n).\]
Then $\U$ is $H$-complete for any $H\notin\Forb(\U)$.
\end{definition}

\begin{sit}
Let us introduce the collections
\begin{align*}
    \U_1 &= \{U_1,U_1^\prime\},\\
    \U_2 &= \{U_6,U_6^\prime\},\\
    \U_3 &= \{V_{5,6},V_{5,6}^\prime\},\\
    \U_4 &= \{V_{5,6},V_{5,6}^\prime,V_{6,5},V_{6,5}^\prime\},\\
    \U_5 &= \{V_{5,6},V_{5,6}^\prime,V_{4,6},V_{4,6}^\prime\}.
\end{align*}
If the tangent line in our construction of one or more cylinders passes through a contracted point, then the polarity cone of such cylinder might become only bigger, and the forbidden cones of considered collections do not change. Thus, we assume that no tangent line passes through a contracted point.
Then their polarity cones are
\begin{align*}
    \Pol(\U_1) =& \Cone(E_1,\ldots,E_6,Q_1,L-E_1),\\
    \Pol(\U_2) =& \Cone(E_1,\ldots,E_6,Q_6,L-E_6),\\
    \Pol(\U_3) =& \Cone(E_1,E_2,E_3,E_4,L_{5,6},Q_5,L-E_5,L-E_6),\\
    \Pol(\U_4) =& \Cone(E_1,E_2,E_3,E_4,L_{5,6},Q_5,L-E_5,L-E_6)\\
    &\cap\Cone(E_1,E_2,E_3,E_4,L_{5,6},Q_6,L-E_5,L-E_6),\\
    \Pol(\U_5) =& \Cone(E_1,E_2,E_3,E_4,L_{5,6},Q_5,L-E_5,L-E_6)\\
    &\cap\Cone(E_1,E_2,E_3,E_5,L_{4,6},Q_4,L-E_4,L-E_6).
\end{align*}
All these polarity cones are full-dimensional. Thus, $\U_i$ consists of $H$-polar divisors for any $H\in\relint(\Pol(\U_i))$, where $i\in\{1,\ldots,5\}$.
The forbidden cones are
\begin{align*}
    \Forb(\U_1) =& \Cone(E_1,\ldots,E_6,Q_1),\\
    \Forb(\U_2) =& \Cone(E_1,\ldots,E_6,Q_6),\\
    \Forb(\U_3) =& \Cone(E_1,E_2,E_3,E_4,L_{5,6},Q_5),\\
    \Forb(\U_4) =& \Cone(E_1,E_2,E_3,E_4,L_{5,6}),\\
    \Forb(\U_5) =& \Cone(E_1,E_2,E_3).
\end{align*}
\end{sit}

\begin{proposition}\label{pr:tr}
Collections $\U_1,\ldots,\U_5$ are transversal.
\end{proposition}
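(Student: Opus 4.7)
My plan is to reduce transversality to a fiber-connectivity claim on $\bigcup_{U\in\U_i}U$ and to verify it by explicit intersection calculations in $\Pic(Y)$.

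First, every cylinder $U=\AA^1\times Z\in\U_i$ carries the natural $\G_a$-action whose orbits are the $\AA^1$-fibers. Consequently, any $\SAut(U)$-invariant subset of $U$ is a union of such fibers, and a $\U_i$-invariant subset $W\subset\bigcup_{U\in\U_i}U$ restricts to a union of fibers on each $U\in\U_i$. Thus transversality of $\U_i$ becomes the claim that the equivalence relation $\sim$ on $\bigcup_{U\in\U_i}U$ generated by ``$x$ and $y$ lie on a common fiber of some $U\in\U_i$'' has at most one equivalence class.

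Second, I would identify the generic fiber classes in $\Pic(Y)$ and compute their pairwise intersection numbers. For $\U_1$ and $\U_2$ the fibers are proper transforms of conics in two pencils in $\PP^2$ tangent to $Q_i$ at distinct points, and two fibers from different pencils meet in $4$ points in $\PP^2$. For $\U_3$ both $V_{5,6}$ and $V_{5,6}'$ have fibers of class $3L-E_5-2E_6$ with self-intersection $4$, so two distinct fibers meet in $4$ points on $Y$. For $\U_4$ the fibers of $V_{6,5}, V_{6,5}'$ have class $3L-2E_5-E_6$, intersecting fibers of $V_{5,6}, V_{5,6}'$ in $(3L-E_5-2E_6)\cdot(3L-2E_5-E_6)=5$ points. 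For $\U_5$ the fibers of $V_{4,6}, V_{4,6}'$ have class $3L-E_4-2E_6$, meeting fibers of $V_{5,6}, V_{5,6}'$ also in $5$ points.

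Third, these positive intersection numbers allow me to connect any two points $x_0, x_1\in\bigcup_{U\in\U_i}U$ by a short chain of fibers. Letting $F_0\subset U$ and $F_1\subset U'$ be the fibers through $x_0$ and $x_1$, I would choose a fiber $G$ of some $U''\in\U_i$ that meets both $F_0$ and $F_1$ in the overlap of the cylinders; for the four-cylinder collections $\U_4, \U_5$ one can take $U''$ distinct from $U$ and $U'$, and for the two-cylinder collections $\U_1, \U_2, \U_3$ one takes $U''$ to be the remaining cylinder, using the $4$-point intersection count to guarantee existence of $G$. Picking $y\in G\cap F_0$ and $z\in G\cap F_1$ gives a chain $x_0\sim y\sim z\sim x_1$, while nongeneric points join this class through their unique fiber.

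The main technical obstacle will be ensuring the bridging intersection points lie genuinely in the cylinder overlap $U\cap U'$ and not on the removed curves (the various $Q_i$, tangent lines, and contracted $(-1)$-curves). This requires a careful check that generic fibers do not share components with the removed boundary divisors, which follows from the explicit descriptions in Section~5; it is most delicate for $\U_4$ and $\U_5$, where several different projections $\sigma'\colon Y\to\PP^1\times\PP^1$ are simultaneously in play and one must confirm that generic fibers of cylinders coming from different factorisations still meet in the common open locus. Once this overlap check is done uniformly for all five collections, the connectivity of $\sim$, and hence Proposition~\ref{pr:tr}, follows.
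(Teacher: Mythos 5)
Your reduction is sound: since the natural $\G_a$-action on each cylinder lies in $\SAut(U)$, any $\U_i$-invariant set meets every cylinder in a union of fibers, so transversality is exactly the statement that the chain-of-fibers equivalence relation on $\bigcup_{U\in\U_i}U$ has a single class. For the pairwise collections $\U_1,\U_2,\U_3$ your argument is essentially the paper's: the paper simply asserts that any fiber of $U_i$ (resp.\ $V_{i,j}$) meets every fiber of $U_i'$ (resp.\ $V_{i,j}'$), and your intersection counts ($2L\cdot2L=4$, $(3L-E_5-2E_6)^2=4$) together with the boundary check supply the justification; that check is genuinely easy here, because the closure of a fiber meets the boundary of its own cylinder only at the tangency point $q$ (resp.\ $q'$), which lies on no fiber of the partner cylinder. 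Where you diverge from the paper is $\U_4$ and $\U_5$: the paper notes that a union of transversal collections (whose unions overlap, as they do here, being dense open subsets) is again transversal, so transversality of the $\U_3$-type pairs gives $\U_4,\U_5$ with no cross-cylinder analysis at all. Your route instead requires that every fiber of, say, $V_{5,6}$ meet every fiber of $V_{6,5}$ inside the overlap; this is stronger than needed and does not follow from the intersection number $(3L-E_5-2E_6)\cdot(3L-2E_5-E_6)=5$ alone, since a priori the whole multiplicity could concentrate at the two points where the respective fiber closures leave their cylinders, and you leave this check unexecuted. The repair is cheap: the closure of a fiber of $V_{6,5}$ is an irreducible curve of class $3L-2E_5-E_6$, hence not contained in the one-dimensional complement of $V_{5,6}\cup V_{5,6}'$, so all but finitely many of its points lie in $V_{6,5}\cap(V_{5,6}\cup V_{5,6}')$, which already chains every point of $V_{6,5}\cup V_{6,5}'$ into the single class coming from $\U_3$ (and likewise for $V_{4,6},V_{4,6}'$ in $\U_5$). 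With that adjustment, or by invoking the paper's union observation, your plan goes through; the intersection-number bookkeeping for the cross-cylinder fibers is then unnecessary.
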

\begin{proof}
Consider a pair of cylinders $U_i,U_i^\prime$ for any $i$.
Any fiber of $U_i$ meets all fibers of $U_i^\prime$ and vice versa. Thus, $\U_1$ and $\U_2$ are transversal.

The same holds for a pair $V_{i,j},V_{i,j}^\prime$ for any $i,j$.
Thus, $\U_3$ is transversal. 

Finally, a union of transversal collections of open subsets is transversal. Thus, $\U_4$ are $\U_5$ are also transversal.
\end{proof}

\begin{lemma}\label{l:subcone}
Let $A,B$ be two rational polyhedral cones in $\QQ^n$ for some $n\in\NN$. Assume that $A\subset B$ and there exists $a\in A$ that belongs to $\relint(B)$. Then $\relint(A)\subset\relint(B)$.
\end{lemma}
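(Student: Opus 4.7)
The plan is to reduce the statement to the standard \emph{line segment principle} of convex geometry: if $a$ lies in the relative interior of a convex set $B$ and $y\in B$, then every point of the open segment $(a,y)$ lies in $\relint(B)$. This principle holds for any convex set (cones included), and it is exactly the tool designed to propagate ``relative interiority'' along segments.

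Fix an arbitrary point $x\in\relint(A)$; the goal is to show $x\in\relint(B)$. First I would use the hypothesis $x\in\relint(A)$ to extend the segment $[a,x]$ slightly past $x$ inside $A$: by definition of the relative interior, there exists $\varepsilon>0$ such that
\[
 y \;\coloneq\; x+\varepsilon(x-a)\;=\;(1+\varepsilon)x-\varepsilon a\;\in\;A.
\]
Rearranging, $x=\tfrac{1}{1+\varepsilon}\,y+\tfrac{\varepsilon}{1+\varepsilon}\,a$, so $x$ is a convex combination of $y$ and $a$ with both coefficients strictly in $(0,1)$; equivalently, $x$ lies in the open segment $(a,y)$.

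Now I invoke the line segment principle applied to the cone $B$: since $a\in\relint(B)$ by hypothesis and $y\in A\subset B$, every point of the open segment $(a,y)$ belongs to $\relint(B)$. In particular, $x\in\relint(B)$, completing the proof. There is no real obstacle here; the only thing to be slightly careful about is that $\relint(A)$ is taken with respect to the affine hull of $A$ (which may be strictly smaller than that of $B$), but this does not interfere with the argument because the segment principle for $B$ only requires $y\in B$ and $a\in\relint(B)$, both of which hold.
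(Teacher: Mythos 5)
Your proof is correct, but it follows a genuinely different route from the paper. The paper argues with generator representations: it writes each generator of $A$ as a nonnegative combination of the generators of $B$, writes $a$ as a strictly positive combination of the generators of $B$ (using $a\in\relint(B)$), writes $x\in\relint(A)$ as a strictly positive combination of the generators of $A$ together with $a$, and then substitutes to exhibit $x$ as a strictly positive combination of the generators of $B$; this is tailored to polyhedral cones and rests on the description of the relative interior of $\Cone(b_1,\ldots,b_s)$ as the set of positive combinations of all the $b_i$. You instead invoke two general facts of convex geometry: the prolongation property of the relative interior (to push the segment $[a,x]$ slightly past $x$ inside $A$, which indeed only needs $a\in A\subset\operatorname{aff}(A)$, as you note) and the line segment principle for $B$. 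Your argument is more general --- it works verbatim for arbitrary convex sets, with no polyhedrality or rationality used --- and avoids the bookkeeping with generators, at the cost of importing those two standard facts rather than the positive-combination description of the relative interior that the paper relies on. Both proofs are complete; your closing remark about the relative interiors being taken with respect to different affine hulls correctly addresses the only subtle point.
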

\begin{proof}
Denote the generating vectors of $A$ by $a_1,\ldots,a_r$ and of $B$ by $b_1,\ldots,b_s$. Consider an arbitrary $x\in\relint(A)$. Then 
\begin{enumerate}
    \item $a_i$ can be expressed as a linear combination of $b_1,\ldots,b_s$ with non-negative coefficients,
    \item $a$ can be expressed as a linear combination of $b_1,\ldots,b_s$ with positive coefficients, and
    \item  $x$ can be expressed as a linear combination of $a_1,\ldots,a_r,a$ with positive coefficients.
\end{enumerate}
Substituting such expressions for $a_1,\ldots,a_r,a$ into an expression for $x$, we obtain a linear combination of $b_1,\ldots,b_s$ with positive coefficients that equals $x$. Thus, $x\in\relint(B)$.
\end{proof}

\begin{proposition}\label{pr:U1}
Consider an ample $\QQ$-divisor $H$ from $\relint(C)$, where $C$ is one of the cones $B_1,\ldots,B_6$.
 Then $\U_1$ is an $H$-complete collection of $H$-polar subsets.
\end{proposition}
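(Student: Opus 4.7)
The plan is to verify the two defining conditions separately: first that $H \in \relint(\Pol(\U_1))$, which by full-dimensionality of the polarity cones means that $\U_1$ consists of $H$-polar subsets, and second that $H \notin \Forb(\U_1) = \Cone(E_1,\ldots,E_6,Q_1)$, which is exactly $H$-completeness.

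For the polarity half, I would apply Lemma~\ref{l:subcone} with $A = B_r$ and $B = \Pol(\U_1) = \Cone(E_1,\ldots,E_6,Q_1,L-E_1)$. The inclusion $B_r \subset \Pol(\U_1)$ rests on the identity $-K \equiv Q_1 + (L-E_1)$, since the remaining generators $E_1,\ldots,E_r$ of $B_r$ are themselves generators of $\Pol(\U_1)$. For the required interior witness I would take $H = \lambda(-K) + \sum_{i=1}^r \mu_i E_i \in \relint(B_r)$ itself and decompose $-K$ as $\beta Q_1 + \gamma(L-E_1)$ with $\beta$ slightly greater than $\lambda$ and $\gamma = 3\lambda - 2\beta > 0$; this makes every coefficient in the expansion of $H$ against the generators of $\Pol(\U_1)$ strictly positive, where the $E_1$-coefficient crucially uses $\mu_1 > 0$ (which holds because $r \geq 1$).

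For the completeness half, the key structural observation is that $E_1, \ldots, E_6, Q_1$ form a $\QQ$-basis of $\Pic_\QQ(Y)$, so membership in $\Forb(\U_1)$ reduces to a sign check on a uniquely determined coordinate. I would compute this expansion of $H$ directly and observe that the $E_1$-coefficient equals $\mu_1 - \lambda$ while the others are manifestly positive. Since $H$ is ample, $H \cdot E_1 > 0$, and the standard intersection form on $Y$ yields $H \cdot E_1 = \lambda - \mu_1$; hence $\mu_1 - \lambda < 0$ and $H \notin \Forb(\U_1)$.

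The main obstacle I anticipate is purely the bookkeeping for the interior witness when $r < 6$: for $r = 6$ the combination $-K + \sum_{i=1}^6 E_i$ already has strictly positive coordinates against every generator of $\Pol(\U_1)$, but for smaller $r$ one must exploit the freedom in the pair $(\beta, \gamma)$ coming from the relation $Q_1 + (L-E_1) = -K$ in order to make the coefficients of the unused $E_{r+1},\ldots,E_6$ strictly positive without destroying positivity on $E_1$. The completeness half then collapses to the single ampleness inequality $\lambda > \mu_1$.
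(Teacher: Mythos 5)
Your proposal is correct and follows essentially the same route as the paper: the polarity half rests on the same identity $-K\equiv Q_1+(L-E_1)$ giving $B_r\subset\Pol(\U_1)$, and the completeness half is exactly the paper's observation that ampleness forces $H\cdot E_1>0$ while every class in $\Forb(\U_1)=\Cone(E_1,\ldots,E_6,Q_1)$ pairs nonpositively with $E_1$. The only difference is bookkeeping: the paper certifies $H\in\relint(\Pol(\U_1))$ by exhibiting the single fixed witness $\sum_{i=1}^6E_i+4Q_1+(L-E_1)\equiv 3(-K+E_1)\in B_1\subset B_r$ and invoking Lemma~\ref{l:subcone}, whereas you perturb the decomposition of $H$ itself (shifting weight from $E_1$ to $Q_1$ and the unused $E_{r+1},\ldots,E_6$), which is the same idea carried out inline.
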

\begin{proof}
Since $-K\equiv Q_1+(L-E_1),$ all generators of $C$ belong to $\Pol(\U_1)$. Since 
\[
  \relint(\Pol(\U_1))\ni\sum_{i=1}^6E_i+4Q_1+(L-E_1)
  \equiv 9L-3\sum_{i=2}^6E_i
  \equiv 3(-K+E_1)\in C,
\]
$\relint(\Pol(\U_1))\supset\relint(C)$ by Lemma~\ref{l:subcone}.
Thus, $\U_1$ consists of $H$-polar cylinders.

The cone $\Forb(\U_1)$ contains no ample divisor classes, since $D\cdot E_1\le0$ for any $D\in\Forb(\U_1)$. Thus, $\U_1$ is $H$-complete.
\end{proof}

\begin{proposition}\label{pr:U2}
Consider an ample $\QQ$-divisor $H$ from $\relint(C_5)$.
 Then $\U_2$ is an $H$-complete collection of $H$-polar subsets.
\end{proposition}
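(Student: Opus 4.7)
The plan is to mirror the proof of Proposition~\ref{pr:U1}, replacing the role of $E_1$ by $E_6$ and accommodating the extra generator $L-E_6$ of $C_5$. First I would check that every generator of $C_5$ lies in $\Pol(\U_2)$. The generators $L-E_6, E_1, \ldots, E_5$ are already explicit generators of $\Pol(\U_2)$, so the only thing to verify is $-K \in \Pol(\U_2)$. This follows from the identity $-K \equiv Q_6 + (L-E_6)$, which is immediate from $Q_6 \equiv -K - L + E_6$. Hence $C_5 \subset \Pol(\U_2)$.

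Next, to invoke Lemma~\ref{l:subcone}, I need one element of $C_5$ lying in $\relint(\Pol(\U_2))$. A convenient witness is $8L - 4E_6$, which admits the two decompositions
\[
8L - 4E_6 \equiv 2(-K) + 2(L-E_6) + 2E_1 + 2E_2 + 2E_3 + 2E_4 + 2E_5 \in \relint(C_5)
\]
and
\[
8L - 4E_6 \equiv (E_1 + E_2 + E_3 + E_4 + E_5) + 2E_6 + Q_6 + 6(L-E_6) \in \relint(\Pol(\U_2)),
\]
the second of which expresses $8L - 4E_6$ as a strictly positive combination of all the generators of $\Pol(\U_2)$. Lemma~\ref{l:subcone} then yields $\relint(C_5) \subset \relint(\Pol(\U_2))$, so $H \in \relint(\Pol(\U_2))$ and $\U_2$ consists of $H$-polar subsets.

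For $H$-completeness, I would argue, as in Proposition~\ref{pr:U1}, that $\Forb(\U_2) = \Cone(E_1, \ldots, E_6, Q_6)$ contains no ample class. Indeed, every generator $D$ of $\Forb(\U_2)$ satisfies $D \cdot E_6 \le 0$: one has $E_6 \cdot E_6 = -1$, $E_i \cdot E_6 = 0$ for $i \ne 6$, and $Q_6 \cdot E_6 = (2L - E_1 - E_2 - E_3 - E_4 - E_5) \cdot E_6 = 0$. Since $H$ is ample, $H \cdot E_6 > 0$, forcing $H \notin \Forb(\U_2)$.

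The argument presents no real obstacle; it is structurally identical to that of Proposition~\ref{pr:U1}. The only mildly creative step is finding the explicit combination realizing the witness $8L - 4E_6$ in step two, which is a small linear-algebra exercise dictated by solving for coefficients in the two generating sets.
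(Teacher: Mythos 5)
Your proof is correct and follows essentially the same route as the paper: check $-K\equiv Q_6+(L-E_6)$ so that $C_5\subset\Pol(\U_2)$, exhibit a witness in $C_5\cap\relint(\Pol(\U_2))$ to apply Lemma~\ref{l:subcone}, and rule out ample classes in $\Forb(\U_2)$ via intersection with $E_6$. The only difference is the choice of witness ($8L-4E_6$ instead of the paper's $6L-2E_6\equiv 2(-K+\sum_{i=1}^5E_i)$), which is immaterial.
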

\begin{proof}
Since $-K\equiv Q_6+(L-E_6),$ all generators of $C$ belong to $\Pol(\U_2)$. Since 
\[
  \relint(\Pol(\U_2))\ni\sum_{i=1}^5E_i+2E_6+Q_6+4(L-E_6)
  \equiv 6L-2E_6
  \equiv 2(-K+\sum_{i=1}^5E_i)\in C_5,
\]
$\relint(\Pol(\U_2))\supset\relint(C_5)$ by Lemma~\ref{l:subcone}.
Thus, $\U_2$ consists of $H$-polar cylinders.

The cone $\Forb(\U_2)$ contains no ample divisor classes, since $D\cdot E_6\le0$ for any $D\in\Forb(\U_2)$. Thus, $\U_2$ is $H$-complete.
\end{proof}

\begin{sit}
Denote \[C_4^\prime=\{H\in C_4\mid H\cdot(L+E_5-2E_6)=0\}.\]
It is the cone generated by $E_1,E_2,E_3,E_4,$ and $-K+2(L-E_6)$.
\end{sit}

\begin{proposition}\label{pr:U3}
Consider an ample $\QQ$-divisor $H$ from $\relint(C4)\setminus\relint(C4')$.
 Then $\U_3$ is an $H$-complete collection of $H$-polar subsets.
\end{proposition}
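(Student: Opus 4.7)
The proof naturally splits into two parts---$H$-polarity and $H$-completeness---paralleling Propositions~\ref{pr:U1} and \ref{pr:U2}. The genuine difficulty, and the main obstacle, is the completeness step: unlike in the previous two cases, $\Forb(\U_3)$ does contain ample classes here (for instance $-K+2(L-E_6)=\sum_{i=1}^4 E_i + L_{5,6} + 2Q_5$), which is precisely why the hypothesis must exclude $\relint(C_4')$.

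For $H$-polarity I would first note that $-K\equiv Q_5+(L-E_5)$, so every generator of $C_4$ belongs to $\Pol(\U_3)$ and thus $C_4\subset\Pol(\U_3)$. To apply Lemma~\ref{l:subcone} it then suffices to exhibit one element of $C_4$ lying in $\relint(\Pol(\U_3))$. The eight generators of $\Pol(\U_3)$ satisfy the single linear relation
\[
E_1+E_2+E_3+E_4+L_{5,6}+Q_5\equiv (L-E_5)+2(L-E_6).
\]
Starting from $-K+(L-E_6)+\sum_{i=1}^4 E_i\in\relint(C_4)$, which expands as $Q_5+(L-E_5)+(L-E_6)+\sum_{i=1}^4 E_i$ with zero $L_{5,6}$-coefficient, adding a small positive multiple of this relation makes all eight coefficients strictly positive. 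Lemma~\ref{l:subcone} then yields $\relint(C_4)\subset\relint(\Pol(\U_3))$, so $\U_3$ consists of $H$-polar subsets.

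For $H$-completeness, the key observation is that the linear functional $L+E_5-2E_6$ defining $C_4'$ inside $C_4$ vanishes on \emph{every} generator of $\Forb(\U_3)$: the four classes $E_i$ ($i\le 4$) are manifestly orthogonal to $L,E_5,E_6$, and direct intersection numbers give $L_{5,6}\cdot(L+E_5-2E_6)=0$ and $Q_5\cdot(L+E_5-2E_6)=0$. Hence $\Forb(\U_3)\subset(L+E_5-2E_6)^\perp$. On the other hand, for $H=a(-K)+b(L-E_6)+\sum_{i=1}^4 c_iE_i\in\relint(C_4)$ (all coefficients strictly positive) one computes $H\cdot(L+E_5-2E_6)=2a-b$, which vanishes precisely when $b=2a$; in that case $H=a\bigl(-K+2(L-E_6)\bigr)+\sum_{i=1}^4 c_iE_i\in\relint(C_4')$. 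Consequently $\Forb(\U_3)\cap\relint(C_4)\subset\relint(C_4')$, and the hypothesis $H\in\relint(C_4)\setminus\relint(C_4')$ therefore forces $H\notin\Forb(\U_3)$, establishing $H$-completeness.
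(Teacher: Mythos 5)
Your proof is correct and follows essentially the same route as the paper: polarity via Lemma~\ref{l:subcone} (with a different but equally valid interior witness, using the relation $\sum_{i=1}^4E_i+L_{5,6}+Q_5\equiv(L-E_5)+2(L-E_6)$ instead of the paper's element $2(-K)+3(L-E_6)$), and completeness via the observation that $\Forb(\U_3)$ lies in the hyperplane orthogonal to $L+E_5-2E_6$, which meets $\relint(C_4)$ only inside $C_4^\prime$. If anything, your completeness step is slightly more careful than the paper's, since you verify explicitly that $\relint(C_4)\cap(L+E_5-2E_6)^{\perp}\subset\relint(C_4^\prime)$, a point the paper leaves implicit.
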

\begin{proof}
Since $-K\equiv Q_5+(L-E_5),$ all generators of $C$ belong to $\Pol(\U_3)$. Since 
\begin{multline*}
  \relint(\Pol(\U_3))\ni\sum_{i=1}^4E_i+L_{5,6}+3Q_5+(L-E_5)+(L-E_6)\\
  \equiv 9L-2\sum_{i=1}^5E_i-5E_6
  \equiv 2(-K)+3(L-E_6)\in C_4,
\end{multline*}
$\relint(\Pol(\U_3))\supset\relint(C_4)$ by Lemma~\ref{l:subcone}.
Thus, $\U_3$ consists of $H$-polar cylinders.

If $H\in\Forb(\U_3)$, then $H\cdot(L+E_5-2E_6)=0$ and $H\in C_4^\prime$, which contradicts our assumption. Thus, $\U_3$ is $H$-complete.
\end{proof}

\begin{proposition}\label{pr:U4}
Consider an ample $\QQ$-divisor $H$ from $\relint(C)$, where $C$ equals either $B_5^\prime$ or $C_5^\prime$.
 Then $\U_4$ is an $H$-complete collection of $H$-polar subsets.
\end{proposition}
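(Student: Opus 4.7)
The plan is to follow the template of Propositions~\ref{pr:U1}--\ref{pr:U3}. First I would verify that $C\subset\Pol(\U_4)$: the relations $-K\equiv Q_5+(L-E_5)\equiv Q_6+(L-E_6)$ put $-K$ in both intersection components of $\Pol(\U_4)$, and each of the remaining generators of $C$---namely $E_1,\ldots,E_4$, $L_{5,6}$, and, in the case $C=C_5^\prime$, also $L-E_6$---is itself a generator of both components of $\Pol(\U_4)$.

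To apply Lemma~\ref{l:subcone}, I need a witness $v\in\relint(C)\cap\relint(\Pol(\U_4))$. The two components of $\Pol(\U_4)$ are exchanged by the involution $E_5\leftrightarrow E_6$, $Q_5\leftrightarrow Q_6$, so a symmetric sum of the generators of the two component cones is expected to lie in the relative interior of each. A short calculation in the basis $L,E_1,\ldots,E_6$ reduces such a symmetric sum to a class of shape $mL-n(E_5+E_6)$, which is then readily checked to be a strictly positive combination of the generators of $B_5^\prime$; this settles the $B_5^\prime$ case. For $C_5^\prime$, however, the very same symmetry forces the $E_5$- and $E_6$-coefficients of $v$ to agree, which in turn forces the $(L-E_6)$-coefficient of $v$ in the $C_5^\prime$-expansion to vanish. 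This is the main obstacle. I resolve it by breaking symmetry: adding a small positive multiple of $L-E_6$ (itself a generator of both components of $\Pol(\U_4)$) keeps the witness in $\relint(\Pol(\U_4))$ and produces a strictly positive $(L-E_6)$-coefficient in the $C_5^\prime$-expansion, while the other coefficients remain positive by continuity. Lemma~\ref{l:subcone} then yields $\relint(C)\subset\relint(\Pol(\U_4))$, so $\U_4$ consists of $H$-polar subsets.

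For the $H$-completeness, $\Forb(\U_4)=\Cone(E_1,E_2,E_3,E_4,L_{5,6})$, and each element $D$ of this cone satisfies $D\cdot E_1\le 0$, since $E_1^2=-1$ and $E_1$ is disjoint from $E_2,E_3,E_4,L_{5,6}$. Hence $D$ cannot be ample, so $H\notin\Forb(\U_4)$ and $\U_4$ is $H$-complete.
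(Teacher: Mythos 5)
Your proposal is correct and follows essentially the same route as the paper: the same verification that all generators of $C$ lie in $\Pol(\U_4)$ via $-K\equiv Q_5+(L-E_5)\equiv Q_6+(L-E_6)$, a witness on the ray of $2L-E_5-E_6$ (your symmetric sum $10L-5(E_5+E_6)$ is proportional to the paper's $8L-4E_5-4E_6\equiv 2(-K+\sum_{i=1}^4E_i+L_{5,6})$), an application of Lemma~\ref{l:subcone}, and the identical completeness argument $D\cdot E_1\le 0$ for $D\in\Forb(\U_4)=\Cone(E_1,E_2,E_3,E_4,L_{5,6})$. Two remarks. First, the ``main obstacle'' you see in the $C_5^\prime$ case is not actually there: Lemma~\ref{l:subcone} only requires the witness $a$ to lie in $A=C$, not in $\relint(C)$, so the vanishing $(L-E_6)$-coefficient is harmless; your perturbation by a small multiple of $L-E_6$ is correct but unnecessary, and the paper treats $B_5^\prime$ and $C_5^\prime$ uniformly with the boundary witness. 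Second, the one point that does require an explicit check is that the witness lies in the interior of \emph{both} component cones of $\Pol(\U_4)$: the symmetry heuristic alone does not give this (a sum of interior points of two cones exchanged by an involution need not lie in either cone), and the positivity check you actually carry out is against the generators of $B_5^\prime$, which addresses membership in $\relint(B_5^\prime)$ rather than in $\relint(\Pol(\U_4))$. The needed check does go through, e.g. via the two strictly positive expansions $\sum_{i=1}^4E_i+L_{5,6}+Q_5+3(L-E_5)+2(L-E_6)\equiv\sum_{i=1}^4E_i+L_{5,6}+Q_6+2(L-E_5)+3(L-E_6)\equiv 8L-4E_5-4E_6$, which is exactly what the paper records; with these expansions (and the full-dimensionality of the two components) your argument is complete.
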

\begin{proof}
Since $-K\equiv Q_5+(L-E_5)\equiv Q_6+(L-E_6),$ all generators of $C$ belong to $\Pol(\U_4)$. Since 
\begin{multline*}
  \relint(\Pol(\U_4))\ni
  \sum_{i=1}^4E_i+L_{5,6}+Q_5+3(L-E_5)+2(L-E_6)\\
  \equiv \sum_{i=1}^4E_i+L_{5,6}+Q_6+2(L-E_5)+3(L-E_6)\\
  \equiv 8L-4E_5-4E_6
  \equiv 2(-K+\sum_{i=1}^4E_i+L_{5,6})\in C,
\end{multline*}
$\relint(\Pol(\U_4))\supset\relint(C)$ by Lemma~\ref{l:subcone}.
Thus, $\U_4$ consists of $H$-polar cylinders.

The cone $\Forb(\U_4)$ contains no ample divisor classes, since $D\cdot E_1\le0$ for any $D\in\Forb(\U_4)$. Thus, $\U_4$ is $H$-complete.
\end{proof}

\begin{proposition}\label{pr:U5}
Consider an ample $\QQ$-divisor $H$ from $\relint(C)$, where $C$ equals one of the cones $C_4^\prime,C_3,C_2,C_1,C_0$.
 Then $\U_5$ is an $H$-complete collection of $H$-polar subsets.
\end{proposition}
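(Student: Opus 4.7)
The plan is to extend the template of Propositions~\ref{pr:U1}--\ref{pr:U4}: for each cone $C$ in the list, establish $C \subset \Pol(\U_5)$ and exhibit a witness in $C \cap \relint(\Pol(\U_5))$, then invoke Lemma~\ref{l:subcone}. For $C \in \{C_0, C_1, C_2, C_3\}$ the situation is routine, since all generators of $C_3$ (and hence of its subcones) lie in $\Pol(\U_5)$: the identities $-K \equiv Q_5 + (L-E_5) \equiv Q_4 + (L-E_4)$ place $-K$ in both of the cones defining $\Pol(\U_5)$, while $L - E_6, E_1, E_2, E_3$ are common generators of those two cones.

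The genuine subtlety lies with $C_4^\prime = \Cone(g_0, E_1, E_2, E_3, E_4)$, where $g_0 = -K + 2(L - E_6)$: its generator $E_4$ is \emph{not} in $\Pol(\U_5)$, because every generator of $\Cone(E_1, E_2, E_3, E_5, L_{4,6}, Q_4, L-E_4, L-E_6)$ has non-negative intersection with $E_4$, whereas $E_4 \cdot E_4 = -1$. So $C_4^\prime \not\subset \Pol(\U_5)$ and Lemma~\ref{l:subcone} cannot be applied directly. I will work instead with the auxiliary cone $\tilde C = \Cone(g_0,\, g_0 + E_4,\, E_1, E_2, E_3)$, and use ampleness to channel $H$ into $\tilde C$: for $H = a_0 g_0 + \sum_{i=1}^{4} a_i E_i \in \relint(C_4^\prime)$, the intersection $H \cdot E_4 = a_0 - a_4$ is positive by ampleness, so
\[
H = (a_0 - a_4)\, g_0 + a_4 (g_0 + E_4) + a_1 E_1 + a_2 E_2 + a_3 E_3 \in \relint(\tilde C).
\]
The containment $\tilde C \subset \Pol(\U_5)$ then boils down to a short calculation exhibiting a nonnegative decomposition of $g_0 + E_4 \equiv 5L - E_1 - E_2 - E_3 - E_5 - 3E_6$ in each of the two cones defining $\Pol(\U_5)$.

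Finally, the divisor $g_0$ itself serves as a universal witness: $g_0 \in C_0 \subset C_1 \subset C_2 \subset C_3$ and $g_0 \in \tilde C$, and a direct calculation produces strictly positive decompositions of $g_0$ in both cones of $\Pol(\U_5)$, placing $g_0 \in \relint(\Pol(\U_5))$. Lemma~\ref{l:subcone} then gives $\relint(C_j) \subset \relint(\Pol(\U_5))$ for $j = 0, 1, 2, 3$ and $\relint(\tilde C) \subset \relint(\Pol(\U_5))$, so $\U_5$ consists of $H$-polar subsets in each case. For $H$-completeness, the argument imitates Propositions~\ref{pr:U1}, \ref{pr:U2}, \ref{pr:U4}: $\Forb(\U_5) = \Cone(E_1, E_2, E_3)$ contains no ample class, since every $D \in \Forb(\U_5)$ satisfies $D \cdot E_1 \leq 0$. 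The main obstacle, as sketched, is the $C_4^\prime$ case, which is the only one where the direct containment $C \subset \Pol(\U_5)$ fails and where the ampleness hypothesis is essential.
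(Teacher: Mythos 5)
Your proposal is correct, and on the routine part it matches the paper: for $C_0,\ldots,C_3$ you use the same witness $-K+2(L-E_6)$ (the paper takes twice this class, $10L-2\sum_{i=1}^5E_i-6E_6$, written as a strictly positive combination in each of the two cones defining $\Pol(\U_5)$), the same Lemma~\ref{l:subcone}, and the same $\Forb(\U_5)=\Cone(E_1,E_2,E_3)$ argument for $H$-completeness (the paper pairs with $E_6$, you with $E_1$; both work). Where you genuinely depart from the paper is the case $C=C_4^\prime$, and your deviation is not cosmetic: the paper's proof simply asserts that all generators of $C$ lie in $\Pol(\U_5)$ and applies Lemma~\ref{l:subcone} to $C$ itself, but, exactly as you observe, $E_4\notin\Cone(E_1,E_2,E_3,E_5,L_{4,6},Q_4,L-E_4,L-E_6)$ since every generator of that cone meets $E_4$ non-negatively while $E_4\cdot E_4=-1$; consequently $C_4^\prime\not\subset\Pol(\U_5)$ and even $\relint(C_4^\prime)\not\subset\relint(\Pol(\U_5))$ (e.g.\ $\varepsilon g_0+E_1+E_2+E_3+E_4$ with $g_0=-K+2(L-E_6)$ and small $\varepsilon>0$ pairs negatively with $E_4$). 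The statement survives only because it is restricted to ample $H$, and your detour through $\tilde C=\Cone(g_0,\,g_0+E_4,\,E_1,E_2,E_3)$, fed by $H\cdot E_4=a_0-a_4>0$, supplies precisely the step the published argument is missing, so your route is a repair rather than a rephrasing. The computations you assert but do not write out do check: $g_0+E_4\equiv 5L-E_1-E_2-E_3-E_5-3E_6\equiv 2Q_4+(L-E_6)+E_1+E_2+E_3+E_5$ and $g_0+E_4\equiv Q_5+(L-E_5)+2(L-E_6)+E_4$, so $g_0+E_4\in\Pol(\U_5)$, while $2g_0\equiv\sum_{i=1}^4E_i+L_{5,6}+3Q_5+(L-E_5)+2(L-E_6)\equiv\sum_{i=1}^3E_i+E_5+L_{4,6}+3Q_4+(L-E_4)+2(L-E_6)$ places your witness $g_0$ in $\relint(\Pol(\U_5))$; do include these two decompositions explicitly when you write the argument up in full.
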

\begin{proof}
Since $-K\equiv Q_5+(L-E_5)\equiv Q_4+(L-E_4),$ all generators of $C$ belong to $\Pol(\U_5)$. Since 
\begin{multline*}
  \relint(\Pol(\U_5))\ni
  \sum_{i=1}^4E_i+L_{5,6}+3Q_5+(L-E_5)+2(L-E_6)\\
  \equiv \sum_{i=1}^3E_i+E_5+L_{4,6}+3Q_4+(L-E_4)+2(L-E_6)\\
  \equiv 10L-2\sum_{i=1}^5E_i-6E_6
  \equiv 2(-K+2(L-E_6))\in C,
\end{multline*}
$\relint(\Pol(\U_5))\supset\relint(C)$ by Lemma~\ref{l:subcone}.
Thus, $\U_5$ consists of $H$-polar cylinders.

The cone $\Forb(\U_5)$ contains no ample divisor classes, since $D\cdot E_6\le0$ for any $D\in\Forb(\U_5)$. Thus, $\U_5$ is $H$-complete.
\end{proof}

\section{Flexibility}

\begin{theorem}
Let $Y$ be a smooth del Pezzo surface of degree 3, and $H$ be a very ample divisor on $Y$ which is not linearly equivalent to $-dK_Y$ for any $d\in\QQ$. Then the  group  $\SAut(X)$ of the affine cone $X=\AffCone_HY$ acts on $X$ with an open orbit.
\end{theorem}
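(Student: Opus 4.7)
The plan is to combine the subdivision of the ample cone provided by Proposition~\ref{pr:cones} with the dedicated cylinder collections of Propositions~\ref{pr:U1}--\ref{pr:U5}, and to apply the general flexibility criterion for affine cones in Theorem~\ref{th:flex}.

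Since $H$ is very ample, its class lies in the interior of $\Eff(Y)$; and since $H$ is not linearly equivalent to $-dK_Y$ for any $d\in\QQ$, the class of $H$ does not lie on the anticanonical ray $B_0=\Cone(-K)$. By Proposition~\ref{pr:cones}, after an appropriate choice of ordering of six disjoint $(-1)$-curves $E_1,\ldots,E_6$, the class of $H$ belongs to the relative interior of one of the remaining fourteen cones $B_1,\ldots,B_6,B_5',C_0,\ldots,C_5,C_5'$.

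I would then dispatch each cone to the corresponding proposition: $\U_1$ handles $H\in\relint(B_r)$ for $1\le r\le 6$ (Proposition~\ref{pr:U1}); $\U_2$ handles $H\in\relint(C_5)$ (Proposition~\ref{pr:U2}); $\U_4$ handles both $H\in\relint(B_5')$ and $H\in\relint(C_5')$ (Proposition~\ref{pr:U4}); and $\U_5$ handles $H\in\relint(C)$ for $C\in\{C_4',C_3,C_2,C_1,C_0\}$ (Proposition~\ref{pr:U5}). The remaining case $H\in\relint(C_4)$ requires a further split according to whether $H\cdot(L+E_5-2E_6)$ vanishes: if $H\in\relint(C_4')$, apply $\U_5$; otherwise $H\in\relint(C_4)\setminus\relint(C_4')$ and $\U_3$ applies by Proposition~\ref{pr:U3}. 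A straightforward tally confirms that these cases cover every cone in Proposition~\ref{pr:cones} other than $B_0$.

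In every case we have produced a transversal collection $\U$ of $H$-polar open affine subsets which is also $H$-complete, so Theorem~\ref{th:flex} delivers an $\SAut(X)$-orbit that is open in $X$ and contains $\pi^{-1}(\bigcup_{U\in\U}U)$. The conceptual obstacles---producing cylinders with sufficiently large polarity cones, verifying transversality, and ruling out ample classes inside the forbidden cones---have all been resolved in Sections~5 and 6, so the final argument is essentially a finite case check; the only mildly delicate point is the dichotomy inside $\relint(C_4)$ governed by the auxiliary linear form $L+E_5-2E_6$, which is what forces the introduction of the subcone $C_4'$ preceding Proposition~\ref{pr:U3}.
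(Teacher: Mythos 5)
Your proposal is correct and coincides with the paper's own argument: reduce via Proposition~\ref{pr:cones} to $H$ in the relative interior of one of the cones other than $B_0$, assign to each cone the collection from Propositions~\ref{pr:U1}--\ref{pr:U5} (with the $C_4$ case split along $C_4'$, exactly as the paper intends), invoke transversality from Proposition~\ref{pr:tr}, and conclude with Theorem~\ref{th:flex}. The only difference is that you spell out the case bookkeeping slightly more explicitly than the paper does.
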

\begin{proof}
By Proposition~\ref{pr:cones}, one may assume that $H$ belongs to the relative interior of one of cones $B_1,\ldots,B_6,B_5^\prime,C_0,\ldots,C_5,C_5^\prime.$ By Propositions~\ref{pr:U1}, \ref{pr:U2}, \ref{pr:U3}, \ref{pr:U4}, and \ref{pr:U5}, $Y$ admits an $H$-complete collection of $H$-polar cylinders, which is also transversal by Proposition~\ref{pr:tr}. By Theorem~\ref{th:flex}, $X$ contains an open $\SAut(X)$-orbit.
\end{proof}
\begin{corollary}\label{c:flex}
The affine cone $X$ over a smooth del Pezzo surface of degree 3 polarized by a very ample non-plurianticanonical divisor is flexible in codimension one.
\end{corollary}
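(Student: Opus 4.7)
The plan is to deduce the corollary immediately from the preceding theorem. Once that theorem produces an $\SAut(X)$-orbit $S \subset X$ which is open in $X$, it remains only to translate \emph{open orbit} into \emph{open set of flexible points}.

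First I would pick any point $p \in S$. Since $S$ is open in $X$, the inclusion $S \hookrightarrow X$ yields $T_p S = T_p X$. On the other hand, $\SAut(X)$ is by definition generated by all $\G_a$-subgroups of $\Aut(X)$, so the tangent space to the orbit $S$ at $p$ coincides with the linear span of the tangent vectors at $p$ to the $\G_a$-orbits through $p$; this is the standard infinitesimal description of an orbit of a group generated by unipotent one-parameter subgroups, as used throughout \cite{AFKKZ}. Combining these two observations, $T_p X$ is spanned by tangent vectors to $\G_a$-orbits, which is exactly the definition of $p$ being flexible.

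Hence every point of the open set $S \subset X$ is flexible, so $X$ contains an open subset of flexible points, i.e. $X$ is flexible in codimension one in the sense introduced in Section~1. There is essentially no real obstacle at this step: the substantial work has been done by the theorem (and by Sections~4--6, which furnish the transversal, $H$-complete collection of $H$-polar cylinders needed to invoke Theorem~\ref{th:flex}). The one point that must not be skipped is the identification of $T_p S$ with the $\G_a$-tangent span at $p$, which rests precisely on the fact that $\SAut(X)$ is generated by unipotent one-parameter subgroups and hence its orbit through $p$ has tangent space generated by the corresponding infinitesimal directions.
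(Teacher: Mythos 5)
Your proposal is correct and follows essentially the same route as the paper, whose entire proof of the corollary is a citation of \cite[Corollary 1.10 and Theorem 2.2]{AFKKZ}: the ``standard infinitesimal description'' you invoke is precisely the content of those cited results, applied to the open orbit produced by the preceding theorem. One caution: the identification of $T_pS$ with the span at $p$ of tangent vectors to $\G_a$-orbits is \emph{not} a formality for a group generated by an arbitrary family of unipotent one-parameter subgroups (for a fixed generating pair of $\G_a$-subgroups the span at $p$ can be a proper subspace of the orbit's tangent space); it holds for $\SAut(X)$ because the family of all $\G_a$-subgroups is closed under conjugation, which is exactly what the AFKKZ orbit machinery uses, so this step should rest on the precise reference rather than be treated as routine.
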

\begin{proof}
The statement follows from \cite[Corollary 1.10 and Theorem 2.2]{AFKKZ}.
\end{proof}

Though $\SAut(X)$ acts on $X$ with open orbit, the complement to this orbit is unknown in general. Thus,

\begin{question}
Which polarized cubic surfaces provide a flexible cone? And which ones provide a cone flexible in codimension two?
\end{question}
Below we provide an example of a flexible cone.

\begin{sit}\label{sit:P1-cyl-D}
Let $(D_1,\ldots,D_5)$ be a maximal 5-tuple of (-1)-curves and $Q$ be one of two (-1)-curves intersecting all $D_1,\ldots,D_5$.
Consider the cylinder construction \ref{sit:P1-cyl} for the contraction $\sigma'\colon Y\to\PP^1\times\PP^1$ of curves $D_1,\ldots,D_5$ and the image of $Q$ as the curve of bidegree $(2,1)$.
We denote by $\V(D_1,\ldots,D_5\mid Q)$ the pair of cylinders given by such construction.
For example, $\U_3 = \V(E_1,E_2,E_3,E_4,L_{5,6}\mid Q_5)$.

Assume that tangent lines $T,T'$ are images of (-1)-curves. 
Equivalently, points $q=Q\cap T$ and $q'=Q\cap T'$ are images of (-1)-curves, say, $D_i$ and $D_j$ respectively. 
Thus, (-1)-curves $Q,T,D_i$ intersect in one point in $Y$, and same for $Q,T',D_j$.
So, the tangent lines $T,T'$ are images of some (-1)-curves if and only if  $Q\cap T\cap D_i$ and $Q\cap T'\cap D_j$ are triple intersections (called \emph{Eckardt points}, see \cite[Section 9.1.4]{Dolg-CAG}) in $Y$ for some $i,j\in\{1,\ldots,5\}$.

Then $R,R'$ are also images (-1)-curves.
As in \ref{sit:P1-cyl}, the complement to $\V(D_1,\ldots,D_5\mid Q)$ consists of (-1)-curves $D_1,\ldots,D_5,Q$ and two intersections of (-1)-curves $T\cap R'$ and $T'\cap R$.
\end{sit}

\begin{example}
Let $Y$ be the Fermat surface $z_0^3+z_1^3+z_2^3+z_3^3=0$ in $\PP^3$. Let us take exceptional curves as follows, e.g. see \cite[Example 4.7]{BK}:
\begin{align*}
    E_1=&\{wz_0+z_1=wz_2+z_3=0\},\\
    E_2=&\{z_0+z_1=z_2+wz_3=0\},\\
    E_3=&\{z_0+wz_1=z_2+z_3=0\},\\
    E_4=&\{z_0+z_3=z_1+z_2=0\},\\
    E_5=&\{z_0+wz_3=z_1+wz_2=0\},\\
    E_6=&\{wz_0+z_3=wz_1+z_2=0\},
\end{align*}
where $w$ is a primitive cubic root of unity.
The 18 Eckardt points on $Y$ are the following:
\begin{align*}
    E_i\cap Q_j\cap L_{i,j}, &\quad \mbox{for all } i,j\in\{1,2,3\}, i\neq j; \\
    E_i\cap Q_j\cap L_{i,j}, &\quad \mbox{for all } i,j\in\{4,5,6\}, i\neq j; \\
    L_{1,i}\cap L_{2,j}\cap L_{3,k}, &\quad \mbox{for all $i,j,k$ such that } \{i,j,k\}=\{4,5,6\}.
\end{align*}

Let us consider 4 pairs of cylinders $\V(D_1,\ldots,D_5\mid Q)$ as in \ref{sit:P1-cyl-D} corresponding to the following data, identifying $T,T',R,R'$ with their proper transforms in $Y$.
\begin{center}
\begin{tabular}{ c | c | c c c | c  c  c}
 $D_1,\ldots, D_5$ & $Q$ & $\sigma^{-1}(q)$ & $T$ & $R$ & $\sigma^{-1}(q')$ & $T'$ & $R'$ \\
 \hline
 $E_1,E_2,E_3,E_4,L_{5,6}$ & $Q_6$ 
 & $E_4$ & $L_{4,6}$ & $L_{4,5}$
 & $L_{5,6}$ & $E_5$ & $E_6$ \\ 
 $L_{1,5},L_{2,5},L_{3,5},E_4,E_6$ & $L_{4,6}$ 
 & $E_4$ & $Q_6$ & $L_{4,5}$
 & $E_6$ & $Q_4$ & $L_{5,6}$ \\ 

$L_{2,3},L_{2,6},L_{3,6},E_1,Q_1$ & $L_{1,5}$ 
 & $L_{2,6}$ & $L_{3,4}$ & $L_{3,5}$
 & $L_{3,6}$ & $L_{2,4}$ & $L_{2,5}$ \\ 

$L_{1,3},L_{3,4},L_{3,6},E_2,E_5$ & $L_{2,5}$ 
 & $L_{3,4}$ & $L_{1,6}$ & $E_4$
 & $L_{3,6}$ & $L_{1,4}$ & $E_6$ 
\end{tabular}
\end{center}
We denote by $\V$ their union, which is a collection of 8 cylinders.
In order to check that $Y=\bigcup_{U\in\V} U,$ it is enough to verify that

In order to prove that $Y=\bigcup_{U\in\V} U$, it suffices (and is easy) to check directly that the complement $Y\setminus\bigcup_{U\in\V}U$ contains no
\begin{itemize}
    \item (-1)-curve;
    \item intersection point of a pair of (-1)-curves;
    \item Eckardt point.
\end{itemize}
Let us take
\[H\equiv 13L-E_1-4E_2-4E_3-E_4-8E_5-3E_6.\]
We claim that $H$ belongs to the interior of $\Pol(\V)$, i.e., to the interior of the polarity cone of each cylinder in $\V$. 
Then the affine cone $\AffCone_H(Y)$ is flexible.
\end{example}

\bibliographystyle{abbrv}
\bibliography{references} 
\end{document}